\newtheorem{thm}{Theorem}
\newtheorem{lemma}{Lemma}
\newtheorem{cor}{Corollary}
\begin{document}

\sloppy

\begin{center}
\textbf {CONTRACTING QUADRATIC OPERATORS OF BISEXUAL POPULATION}\\[2mm]
\begin{center}
Nasir N. Ganikhodjaev\\[2mm]

Department of Computational and Theoretical Sciences, Faculty of Science,IIUM,\\[2mm]
25200 Kuantan, Malaysia.\\[2mm]
Email: nasirgani@hotmail.com\\[2mm]

Uygun U. Jamilov \\[2mm]
Department of Computational and Theoretical Sciences, Faculty of Science,IIUM,\\[2mm]
25200 Kuantan, Malaysia.\\[2mm]
Institute of Mathematics at the National University Uzbekistan,
Tashkent, Uzbekistan,\\[2mm]
 e-mail: {\tt jamilovu@yandex.ru, ujamilov@rambler.ru}
\end{center}
\end{center}

 \vskip 0.5 truecm

{\bf Abstract.} In this paper we find a sufficient condition under which the
operator of bisexual population is contraction and show that this condition is not necessary.

\vskip 0.5 truecm

{\bf Mathematics Subject Classification(2000):} Primary 37N25,
Secondary 92D10.

\vskip 0.5 truecm

{\bf Key words.} Quadratic stochastic operator, fixed point,
trajectory, contracting operators.

\vskip 0.8 truecm

\section{Introduction}

    The action of genes is manifested statistically in sufficiently
large communities of matching individuals (belonging to the same
species). These communities are called populations \cite{Lyu}. The
population exists not only in space but also in time, i.e. it has
its own life cycle. The basis for this phenomenon is reproduction
by mating. Mating in a population can be free or subject to
certain restrictions.

    The whole population  in space and time comprises discrete
generations $F_0,F_1,...$. The generation $F_{n+1}$ is the set of
individuals whose parents belong to the $F_n$ generation. A state
of a population is a distribution of probabilities of the
different types of organisms in every generation. Type partition
is called differentiation. The simplest example is sex
differentiation. In bisexual population any kind of
differentiation must agree with the sex differentiation, i.e. all
the organisms of one type must belong to the same sex. Thus, it is
possible  to speak of male and female types.

    The evolution (or dynamics) of a population comprises a
determined change of state in the next generations as a result of
reproductions and selection. This evolution of a population can be
studied by a dynamical system (iterations) of a quadratic
stochastic operator.

    The history of the quadratic stochastic operators  can be traced
back to the  work of S. Bernshtein \cite{Br}. For more than 80
years this theory has been developed and many papers were
published (see \cite{Br}-\cite{Zh}). Several problems of physical
and biological systems lead to necessity of study the asymptotic
behavior of the trajectories
of quadratic stochastic operators.\\

    Let  $E=\{1,2,...,m\}$. By the $(m-1)-$ simplex we mean the
set
\begin{equation} \label{simp1}
S^{m-1}=\{\textbf{x}=(x_1,...,x_m)\in R^m: x_i\geq 0, \sum^m_{i=1}x_i=1 \}.
\end{equation}
    Each element $\textbf{x}\in S^{m-1}$ is a probability measure on $E$ and so
it may be looked upon as the state of a biological (physical and
so on) system of $m$ elements.\\

    A quadratic stochastic operator $V:S^{m-1}\rightarrow S^{m-1}$
has the form
\begin{equation} \label{kso2}
V: x_k'=\sum^m_{i,j=1}p_{ij,k}x_ix_j, \ \ (k=1,...,m),
\end{equation}
where  $p_{ij,k}-$  coefficient of heredity and
\begin{equation} \label{koef3}
 p_{ij,k}=p_{ji,k} \geq 0, \ \ \sum^m_{k=1}p_{ij,k}=1, \ \ (i,j,k=1,...,m).
\end{equation}

For a given  $x^{(0)}\in S^{m-1}$, the trajectory  $\{x^{(n)}\}, \
\ n=0,1,2,...$ of $x^{(0)}$ under the action of QSO (\ref{kso2})
is defined by $x^{(n+1)}=V(x^{(n)}),$ where $n=0,1,2,...$\\

One of the main problems in mathematical biology is to study the
asymptotic behavior of the trajectories. There are many papers
devoted to study of the evolution of the free population, i.e. to
study of dynamical system generated by quadratic stochastic
operator (\ref{kso2}), see e.g. \cite{GR1}-\cite{ZhM}. In \cite{GRRU} a survey
of theory quadratic stochastic operators is given.\\

In this paper we find a condition under which the evolutionary
operators of bisexual population is contraction.\\

\section{Definitions}

In this section following \cite{Lyu}, we describe the evolution
operator of a bisexual population. Assuming that the population is
bisexual we suppose that the set of females can be partitioned
into finitely many different types indexed by $\{1,2,...,n\}$ and,
similarly, that the male types are indexed by $\{1,2,...,\nu\}$.
The number $n+\nu$ is called the dimension of the population. The
population is described by its state vector $(\textbf{x},\textbf{y})$ in
$S^{n-1}\times S^{\nu-1}$, the product of two unit simplexes in
$\mathbb{R}^n$ and $\mathbb{R}^\nu$ respectively. Vectors $\textbf{x}$ and
$\textbf{y}$ are the probability distributions of the females and males
over the possible types:
\begin{equation} \label{simpbp4}
x_i\geq0,\  \ \sum_{i=1}^{n}x_i=1;\  \  y_j\geq0,\  \
\sum_{j=1}^{\nu}y_j=1.
\end{equation}

    Denote $S=S^{n-1}\times S^{\nu-1}$. We call the partition into
types hereditary if for each possible state $\textbf{z}=(\textbf{x},\textbf{y})\in S$
describing the current generation, the state
$z^\prime=(x^\prime,y^\prime)\in S$ is uniquely defined describing
the next generation. This means that the association $z\rightarrow
z^\prime$ defined a map $V:S\rightarrow S$ called the evolution
operator.
    For any point $z^{(0)}\in S$ the sequence $z^{(t)}=V(z^{(t-1)}),
t=1,2,...$ is called the trajectory of $z^{(0)}$.
    Let $p_{ij,k}^{(f)}$ and $p_{ij,l}^{(m)}$ be inheritance
coefficients defined as the probability that a female offspring is
type $k$ and, respectively, that a male offspring is type $l$,
when the parental pair is $ij(i,k=1,2,...,n;$ and
$j,l=1,2,...,\nu)$. We have
\begin{equation} \label{koefbp5}
p_{ij,k}^{(f)}\geq 0 , \sum \limits_{k=1}^n p_{ij,k}^{(f)} =1, \
\  p_{ij,l}^{(m)}\geq 0 , \sum \limits_{l=1}^\nu p_{ij,l}^{(m)}=1.
\end{equation}

    Let $z^\prime=(x^\prime,y^\prime)$ be the state of the offspring
population at the birth stage. This is obtained from inheritance
coefficients as

\begin{equation} \label{ksobp5}
W:\left\{\begin{array}{ll}
x^\prime_k=\sum \limits_{i,j=1}^{n,\nu} p_{ij,k}^{(f)}x_iy_j,\  \ (1\leq k \leq n)\\[4mm]
y^\prime_l= \sum \limits_{i,j=1}^{n,\nu} p_{ij,l}^{(m)}x_iy_j,\ \
(1\leq l \leq \nu).
\end{array}\right.
\end{equation}

    We see from (\ref{ksobp5}) that for a bisexual population the
evolution operator is a quadratic mapping of $S$ into itself. But
for free population the operator is quadratic mapping of the
simplex into itself given by (\ref{kso2}).

    In \cite{RL} an algebra of the bisexual population is defined as the
following:\\
Consider $\{e_1,...,e_{n+\nu}\}$ the canonical basis on
$\mathbb{R}^{n+\nu}$ and divide the basis as $e^{(f)}_i=e_i,
i=1,...,n$ and $e^{(m)}_i=e_{n+i}, i=1,...,\nu$. Introduce on
$\mathbb{R}^{n+\nu}$ a multiplication defined by
\begin{equation} \label{multbp8}
\begin{array}{llll}
e^{(f)}_ie^{(m)}_j=e^{(m)}_je^{(f)}_i=\frac{1}{2}\bigg(
\sum\limits_{k=1}^np^{(f)}_{ij,k}e^{(f)}_{k}+\sum\limits_{l=1}^\nu
p^{(m)}_{ij,l}e^{(m)}_{l}\bigg);\\[2mm]
e^{(f)}_ie^{(f)}_k=0, \  \ i,k=1,...,n;\\[2mm]
e^{(m)}_je^{(m)}_l=0, \  \ j,l=1,...,\nu;\\[2mm]
\end{array}
\end{equation}

Thus the coefficients of bisexual inheritance is the structure
constants of an algebra, i.e. a bilinear mapping of
$\mathbb{R}^{n+\nu}\times\mathbb{R}^{n+\nu}$ to
$\mathbb{R}^{n+\nu}$.
    The general formula for the multiplication is the extension of
(\ref{multbp8}) by bilinearity, i.e. for $z,t\in
\mathbb{R}^{n+\nu}$,
$$
z=(x,y)=\sum\limits_{i=1}^n x_ie^{(f)}_i+\sum\limits_{j=1}^\nu
y_je^{(m)}_j ,  \  \  t=(u,v)= \sum\limits_{i=1}^n
u_ie^{(f)}_i+\sum\limits_{j=1}^\nu v_je^{(m)}_j
$$
using (\ref{multbp8}), we obtain
\begin{equation} \label{multbp9}
\begin{array}{llll}
zt=\frac{1}{2}\sum\limits_{k=1}^n\bigg(
\sum\limits_{i=1}^n\sum\limits_{j=1}^\nu
p^{(f)}_{ij,k}(x_iv_j+u_iy_j)\bigg)e^{(f)}_{k}+\\[2mm]
\ \ + \frac{1}{2}\sum\limits_{l=1}^\nu \bigg(
\sum\limits_{i=1}^n\sum\limits_{j=1}^\nu
p^{(m)}_{ij,l}(x_iv_j+u_iy_j)\bigg)e^{(m)}_{l}.
\end{array}
\end{equation}

From (\ref{multbp9}) and using (\ref{ksobp5}), in the particular
case that $z=t$, i.e. $x=u$ and $y=v$, we obtain
$$
zz=z^2=\sum\limits_{k=1}^n\bigg(
\sum\limits_{i=1}^n\sum\limits_{j=1}^\nu
p^{(f)}_{ij,k}x_iy_j\bigg)e^{(f)}_{k}+
$$
$$
\ \ +\sum\limits_{l=1}^\nu \bigg(
\sum\limits_{i=1}^n\sum\limits_{j=1}^\nu
p^{(m)}_{ij,l}x_iy_j\bigg)e^{(m)}_{l}=W(z).
$$
for any $z\in S$.
    This algebraic interpretation is very useful. For example, a
bisexual population state $z=(x,y)$ is an equilibrium (fixed
point) precisely when $z$ is an idempotent element of the
set $S$, i.e. $z=z^2$.

    The algebra $\mathcal{B}=\mathcal{B}_W$ generated by the evolution operator $W$ (see
(\ref{ksobp5})) is called the {\it evolution algebra of the bisexual
population}.

    In \cite{RL} it was shown that if $z$ is a fixed point then $z\in
R^{n+\nu}_0 \bigcup R^{n+\nu}_1$, where
\begin{equation} \label{hpbp6}
R^{n+\nu}_\eta=\{z=(x,y):\sum\limits_{i=1}^nx_i=\sum\limits_{j=1}^{\nu}y_j=\eta\},
\ \ \eta=0,1.
\end{equation}
    For simplex $S=S^{n-1}\times S^{\nu-1}$  by  tangent space we get
\begin{equation} \label{hpbp7}
R^{n+\nu}_0=\{z=(x,y):\sum\limits_{i=1}^nx_i=\sum\limits_{j=1}^{\nu}y_j=0\}.
\end{equation}

\section{Contracting operators}

    In operator theory, a bounded operator $W: X \rightarrow Y$ between normed vector spaces
$X$ and $Y$ is said to be a {\it contraction} if its operator norm $\|W\| \leq 1$.

    An extremal example of a quadratic contraction is the constant operator.
In this case the coefficients $ p_{ij,k}^{(f)},p_{ij,l}^{(m)}$ do not depend
on $i$ and $j$. This suggests that for a sufficiently small scattering of
coefficient for every fixed $k,l$ the quadratic operator will be a contraction.
This remark can be expressed as a precise theorem.

    The {\it Lipschitz constant} of an operator  $W:\mathbb{R}^{n+\nu}\rightarrow \mathbb{R}^{n+\nu}$  is
$$
L(W)=\sup\limits_{z\neq t} \frac{\|Wz-Wt\|}{\|z-t\|},
$$
where $\|\cdot\|$ is some norm in $\mathbb{R}^{n+\nu}$. If this norm can be
chosen so that $L(W)<1$ then $W$ will be a strict contraction in this norm
with the consequences: unique fixed point, convergence of all trajectories to
this point, exponential rate of convergence. Unless otherwise specified, we
will use the $l_1-$ norm in the basis $e^{(f)}_i=e_i,
i=1,...,n$ and $e^{(m)}_i=e_{n+i}, i=1,...,\nu$ defined as $\|z\|=\sum\limits_{i=1}^n x_i+\sum\limits_{j=1}^\nu y_j$ for
$z=(x,y)=\sum\limits_{i=1}^n x_ie^{(f)}_i+\sum\limits_{j=1}^\nu
y_je^{(m)}_j.$

\begin{lemma}\label{lemma2}\cite{Lyu}. Let $\Delta$ be a convex $n-$
dimensional compact in $\mathbb{R}^n$, $F:\Delta\rightarrow\Delta$ be a
smooth map. Then (for any norm) $L(F)\equiv \max\limits_{z\in\Delta}
\|d_zF\|$.
\end{lemma}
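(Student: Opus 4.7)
The plan is to prove the two inequalities $L(F)\le\max_{z\in\Delta}\|d_zF\|$ and $\max_{z\in\Delta}\|d_zF\|\le L(F)$ separately, using convexity of $\Delta$ for the first and the definition of the differential for the second. Note first that since $F$ is smooth and $\Delta$ is compact, $z\mapsto \|d_zF\|$ is continuous, so the maximum is attained.

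\textbf{Upper bound.} For any two points $z,t\in\Delta$, convexity of $\Delta$ guarantees that the segment $\gamma(s)=(1-s)t+sz$, $s\in[0,1]$, lies entirely in $\Delta$. By smoothness of $F$ we may apply the fundamental theorem of calculus to the composition $F\circ\gamma$:
$$F(z)-F(t)=\int_0^1\frac{d}{ds}F(\gamma(s))\,ds=\int_0^1 d_{\gamma(s)}F(z-t)\,ds.$$
Taking norms and using $\|d_{\gamma(s)}F(z-t)\|\le\|d_{\gamma(s)}F\|\cdot\|z-t\|$ yields
$$\|F(z)-F(t)\|\le\left(\int_0^1\|d_{\gamma(s)}F\|\,ds\right)\|z-t\|\le\max_{\xi\in\Delta}\|d_\xi F\|\cdot\|z-t\|.$$
Dividing by $\|z-t\|$ and taking the supremum over $z\ne t$ gives $L(F)\le\max_{\xi\in\Delta}\|d_\xi F\|$.

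\textbf{Lower bound.} Fix $z\in\Delta$ and a nonzero tangent direction $v$. Since $\Delta$ is $n$-dimensional, for $z$ in the interior we can find $\varepsilon_0>0$ such that $z+\varepsilon v\in\Delta$ for all $|\varepsilon|<\varepsilon_0$. By definition of the differential,
$$F(z+\varepsilon v)-F(z)=\varepsilon\, d_zF(v)+o(\varepsilon),$$
so
$$\frac{\|F(z+\varepsilon v)-F(z)\|}{\|\varepsilon v\|}\xrightarrow[\varepsilon\to0]{}\frac{\|d_zF(v)\|}{\|v\|}.$$
The left-hand side is bounded above by $L(F)$ for every $\varepsilon$, hence so is the limit. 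Taking the supremum over $v\ne0$ gives $\|d_zF\|\le L(F)$ for every interior $z$, and by continuity this extends to the whole of $\Delta$; taking the maximum gives $\max_{z\in\Delta}\|d_zF\|\le L(F)$.

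\textbf{Main obstacle.} The only delicate point is the interplay with the boundary of $\Delta$: the differential is defined on the ambient $\mathbb{R}^n$, but to realize difference quotients from a boundary point in every direction one must either restrict to interior points (then use continuity of $\|d_\cdot F\|$ to pass to the boundary) or use the fact that $\Delta$ is $n$-dimensional so its interior is dense. Beyond this, the argument is a routine combination of convexity (used only in the upper bound, to keep the connecting segment inside $\Delta$) and the first-order Taylor expansion.
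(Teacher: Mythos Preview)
Your argument is correct and is the standard proof of this classical fact: the upper bound via the mean-value inequality along the segment (using convexity), and the lower bound by realizing the operator norm of $d_zF$ as a limit of difference quotients at interior points, then passing to the boundary by continuity of $z\mapsto d_zF$.

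Note, however, that the paper does \emph{not} supply its own proof of this lemma: it is simply quoted from Lyubich's monograph \cite{Lyu} (as indicated by the citation attached to the lemma heading), and used as a black box in the proof of Theorem~\ref{thm1}. So there is no ``paper's proof'' to compare against; your write-up fills in the omitted argument, and does so correctly.
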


\begin{lemma}\label{lemma3}\cite{Lyu}. Let a matrix  $A=(a_{ij})_{i,j=1}^n$ satisfies
\begin{equation} \label{matrix12}
\sum\limits_{i=1}^na_{i1}=\sum\limits_{i=1}^na_{i2}=...=\sum\limits_{i=1}^na_{in}.
\end{equation} Then
\begin{equation} \label{matrix14}
\|A|\mathbb{R}^n_0\|=\frac{1}{2}\max\limits_{j_1\neq j_2}\sum\limits_{i=1}^n|a_{ij_1}-a_{ij_2}|,
\end{equation}
where $A|\mathbb{R}^n_0$ is restriction operator $A$ on $\mathbb{R}^n_0$.
\end{lemma}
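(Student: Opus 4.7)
The strategy is to recognize $\|A|\mathbb{R}^n_0\|$ as the supremum of the convex function $x\mapsto\|Ax\|_1$ over the convex set $\{x\in\mathbb{R}^n_0:\|x\|_1\le 1\}$, hence as a maximum over its extreme points, which I will identify as the vectors $\tfrac12(e_{j_1}-e_{j_2})$ for $j_1\ne j_2$. A preliminary remark is that the column-sum hypothesis is exactly what guarantees that $A$ sends $\mathbb{R}^n_0$ into itself: for any $x$ with $\sum_j x_j=0$ one has $\sum_i(Ax)_i=\bigl(\sum_i a_{ij}\bigr)\sum_j x_j=0$ because the common value of the column sums factors out. This makes the restriction $A|\mathbb{R}^n_0$ well defined, and the hypothesis is not used again in the proof of the formula itself.

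For the upper bound I would take an arbitrary $x\in\mathbb{R}^n_0$ with $\|x\|_1=1$ and decompose $x=x^+-x^-$ into its positive and negative parts; since $\sum_i x_i=0$ both satisfy $\|x^+\|_1=\|x^-\|_1=\tfrac12$. A standard transportation argument then produces nonnegative numbers $\lambda_{ij}$ with row sums $\sum_j\lambda_{ij}=x^+_i$ and column sums $\sum_i\lambda_{ij}=x^-_j$, so that
\[
x=\sum_{i,j}\lambda_{ij}(e_i-e_j),\qquad \sum_{i,j}\lambda_{ij}=\tfrac12.
\]
Denoting by $A_k$ the $k$-th column of $A$, linearity gives $Ax=\sum_{i,j}\lambda_{ij}(A_i-A_j)$, and the $l_1$ triangle inequality yields $\|Ax\|_1\le\tfrac12\max_{i\ne j}\|A_i-A_j\|_1$, which is exactly the right-hand side of the claimed equality.

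For the matching lower bound I would simply evaluate at an extremal vector $x^{\ast}=\tfrac12(e_{j_1^{\ast}}-e_{j_2^{\ast}})$, where $(j_1^{\ast},j_2^{\ast})$ achieves the maximum on the right: $x^{\ast}\in\mathbb{R}^n_0$, $\|x^{\ast}\|_1=1$, and $\|Ax^{\ast}\|_1=\tfrac12\sum_i|a_{ij_1^{\ast}}-a_{ij_2^{\ast}}|$. The only step that is not entirely routine is the transportation decomposition, but this is a standard construction (built greedily by iteratively subtracting $\min(x^+_i,x^-_j)\cdot(e_i-e_j)$ until no positive or negative coordinates remain), so I expect no serious obstacle; the proof amounts to a geometric description of the unit ball of the $l_1$ norm on the hyperplane $\mathbb{R}^n_0$.
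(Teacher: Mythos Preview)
Your argument is correct. The paper itself does not prove this lemma: it is quoted from Lyubich's monograph \cite{Lyu} and used as a black box in the proof of Theorem~\ref{thm1}, so there is no in-paper proof to compare against. Your approach---identifying the extreme points of the $l_1$ unit ball in $\mathbb{R}^n_0$ as the vectors $\tfrac12(e_{j_1}-e_{j_2})$ via a transportation decomposition---is the standard one and is essentially what appears in Lyubich. The observation that the equal-column-sum hypothesis is needed only to make the restriction $A|\mathbb{R}^n_0$ an endomorphism (and not for the norm formula itself) is also accurate: the same computation gives $\sup\{\|Ax\|_1:x\in\mathbb{R}^n_0,\ \|x\|_1\le1\}$ for any matrix $A$, but without the hypothesis the image need not lie in $\mathbb{R}^n_0$.
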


 For each $z\in \mathcal{B}$ we have linear operator $M_z:\mathcal{B}
\rightarrow \mathcal{B}$ defined by $M_z(t)=zt$.

\begin{thm}\label{thm1} The following inequality holds for the Lipschitz's constant

$$L(W) \leq \max\limits_{i_1,i_2,j}\bigg(\sum\limits_{k=1}^n|p_{i_1j,k}^{(f)}-p_{i_2j,k}^{(f)}|+
\sum\limits_{l=1}^\nu|p_{i_1j,l}^{(m)}-p_{i_2j,l}^{(m)}|\bigg)+$$
\begin{equation} \label{detbp15}
\max\limits_{j_1,j_2,i}\bigg(\sum\limits_{k=1}^n|p_{ij_1,k}^{(f)}-p_{ij_2,k}^{(f)}|+
\sum\limits_{l=1}^\nu|p_{ij_1,l}^{(m)}-p_{ij_2,l}^{(m)}|\bigg).
\end{equation}
\end{thm}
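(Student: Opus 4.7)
The plan is to exploit the algebraic reformulation $W(z)=z^{2}$ from Section~2 and apply the two preparatory lemmas in sequence. By Lemma~\ref{lemma2}, $L(W)$ equals $\sup_{z\in S}\|d_zW\|$, where the operator norm is taken on the tangent subspace $R^{n+\nu}_0$ relative to $\|\cdot\|_1$. Since multiplication in $\mathcal{B}$ is commutative, the differential of $W(z)=z^{2}$ is $d_zW(t)=2zt=2M_z(t)$, so it remains to bound $\|2zt\|_1$ uniformly for $z=(x,y)\in S$ and $t=(u,v)\in R^{n+\nu}_0$ with $\|t\|_1=1$.

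The next step is to split $d_zW$ into two contributions, one for each component of the perturbation. Using (\ref{multbp9}) with one factor equal to $z$ and the other to $t$ gives
\[
d_zW(u,v)=\Phi_y(u)+\Psi_x(v),
\]
where, for instance, the $k$-th female coordinate of $\Phi_y(u)$ is $\sum_{i,j}p^{(f)}_{ij,k}u_iy_j$ and the $l$-th male coordinate is $\sum_{i,j}p^{(m)}_{ij,l}u_iy_j$, with $\Psi_x(v)$ defined symmetrically in the male perturbation. On $R^{n+\nu}_0$ one has independently $u\in\mathbb{R}^n_0$, $v\in\mathbb{R}^\nu_0$ and $\|u\|_1,\|v\|_1\le\|t\|_1$, so the triangle inequality yields
\[
\|d_zW|_{R^{n+\nu}_0}\|\le\|\Phi_y|_{\mathbb{R}^n_0}\|+\|\Psi_x|_{\mathbb{R}^\nu_0}\|.
\]

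To estimate each summand I would invoke Lemma~\ref{lemma3}. The crucial check is the equal-column-sum hypothesis (\ref{matrix12}) for the $(n+\nu)\times n$ matrix representing $\Phi_y$: its $i$-th column sums to $\sum_k\sum_j p^{(f)}_{ij,k}y_j+\sum_l\sum_j p^{(m)}_{ij,l}y_j=\sum_j y_j(1+1)=2$, by (\ref{koefbp5}) and $\sum_jy_j=1$; the analogous identity holds for $\Psi_x$. Lemma~\ref{lemma3} (in its obvious extension to rectangular matrices with equal column sums) then controls $\|\Phi_y|_{\mathbb{R}^n_0}\|$ by the maximum of column differences $\|\phi_{i_1}-\phi_{i_2}\|_1$. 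Pulling absolute values inside the $j$-sum via the triangle inequality, and using $y_j\ge 0$ with $\sum_j y_j=1$, dominates $\|\phi_{i_1}-\phi_{i_2}\|_1$ by $\max_j\bigl[\sum_k|p^{(f)}_{i_1j,k}-p^{(f)}_{i_2j,k}|+\sum_l|p^{(m)}_{i_1j,l}-p^{(m)}_{i_2j,l}|\bigr]$, which produces the first summand of (\ref{detbp15}); the symmetric argument for $\Psi_x$, swapping $(i,j)$ and $(x,y)$, gives the second.

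The main obstacle I anticipate is the bookkeeping around the two nested simplex structures: the tangent space $R^{n+\nu}_0$ is strictly smaller than the ambient sum-zero subspace $\mathbb{R}^{n+\nu}_0$, so Lemma~\ref{lemma3} cannot be applied to the full $(n+\nu)\times(n+\nu)$ differential at once, and the decomposition $d_zW=\Phi_y+\Psi_x$ is what reduces the problem to two one-sided applications. The equal-column-sum identity is the unique place where the stochasticity (\ref{koefbp5}) enters, and it also explains why the right-hand side of (\ref{detbp15}) naturally splits into two maxima, one indexed by pairs of female types with $j$ fixed and the other by pairs of male types with $i$ fixed.
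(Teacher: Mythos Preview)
Your proposal is correct and follows essentially the same strategy as the paper: compute $d_zW=2M_z$ via the algebra structure, invoke Lemma~\ref{lemma2} to reduce $L(W)$ to a bound on $\|M_z|_{R^{n+\nu}_0}\|$, split $M_z$ according to the bilinear product, and apply Lemma~\ref{lemma3} to obtain the column--difference expressions.

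The only organizational difference is the order of the two reductions. The paper expands $M_z=\sum_i x_iM_{e_i^{(f)}}+\sum_j y_jM_{e_j^{(m)}}$, applies the triangle inequality in the \emph{multiplier} $z$ first to get $\|M_z\|\le\max_i\|M_{e_i^{(f)}}\|+\max_j\|M_{e_j^{(m)}}\|$, and then invokes Lemma~\ref{lemma3} on each basis multiplication operator separately. You instead split in the \emph{argument} $t=(u,v)$ first, writing $d_zW(u,v)=\Phi_y(u)+\Psi_x(v)$, apply Lemma~\ref{lemma3} to the $y$--dependent matrix $\Phi_y$ (resp.\ $\Psi_x$), and only then use convexity in $y$ (resp.\ $x$) to pass to the max over $j$ (resp.\ $i$). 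Because $M_{e_j^{(m)}}$ annihilates $(0,v)$ and $M_{e_i^{(f)}}$ annihilates $(u,0)$, the two decompositions coincide: your $\Phi_y$ is exactly $2\sum_j y_jM_{e_j^{(m)}}$. Your remark that Lemma~\ref{lemma3} is being used for rectangular matrices with equal column sums, and your observation that $R^{n+\nu}_0=\mathbb{R}^n_0\times\mathbb{R}^\nu_0$ is what makes the one--sided applications legitimate, are points the paper leaves implicit.
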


\begin{proof} For the operator $W$ in  $S$ the derivative is
\begin{equation} \label{jacovbp11}
d_zW=\frac{1}{2}\left( \begin{array}{lllllllllllllll}
\sum\limits_{j=1}^\nu p^{(f)}_{1j,1}y_j & ... &
\sum\limits_{j=1}^\nu p^{(f)}_{nj,1}y_j & \sum\limits_{i=1}^n
p^{(f)}_{i1,1}x_i & ... &
\sum\limits_{i=1}^n p^{(f)}_{i\nu,1}x_i \\[2mm]
\  \ \vdots & \ddots & \  \ \vdots & \  \ \vdots & \ddots &\  \  \vdots\\[2mm]
\sum\limits_{j=1}^\nu p^{(f)}_{1j,n}y_j & ... &
\sum\limits_{j=1}^\nu p^{(f)}_{nj,n}y_j & \sum\limits_{i=1}^n
p^{(f)}_{i1,n}x_i & ... &
\sum\limits_{i=1}^n p^{(f)}_{i\nu,n}x_i\\[2mm]
\sum\limits_{j=1}^\nu p^{(m)}_{1j,1}y_j & ... &
\sum\limits_{j=1}^\nu p^{(m)}_{nj,1}y_j & \sum\limits_{i=1}^n
p^{(m)}_{i1,1}x_i & ... &
\sum\limits_{i=1}^n p^{(m)}_{i\nu,1}x_i \\[2mm]
\  \ \vdots & \ddots & \  \ \vdots & \  \ \vdots & \ddots &\  \  \vdots\\[2mm]
\sum\limits_{j=1}^\nu p^{(m)}_{1j,\nu}y_j & ... &
\sum\limits_{j=1}^\nu p^{(m)}_{nj,\nu}y_j & \sum\limits_{i=1}^n
p^{(m)}_{i1,\nu}x_i & ... &
\sum\limits_{i=1}^n p^{(m)}_{i\nu,\nu}x_i\\[2mm]
\end{array}\right)
\end{equation}
$$d_zW=2M_z=2\sum\limits_{k=1}^nx_kM^{(f)}_k+2\sum\limits_{l=1}^\nu y_lM^{(m)}_l,$$
where $M_k^{(f)}=M_{e_k^{(f)}}$ and $M_l^{(m)}=M_{e_l^{(m)}}$ is the multiplication maps with
matrixes $(p^{(f)}_{ij,k})_{i,k=1}^n$ and respectively
$(p^{(m)}_{ij,l})_{j,l=1}^\nu$.\\
By Lemma \ref{lemma2} we have  $L(W)=2\max\limits_{z\in S}
\|M_z\|\leq 2\max\limits_{k}\|M^{(f)}_k\|+2\max\limits_{l}\|M_l^{(m)}\|$.\\

By Lemma \ref{lemma3},
$$\|M_k^{(f)}\|=\frac{1}{2}\max\limits_{i_1,i_2,j}\bigg(\sum\limits_{k=1}^n|p_{i_1j,k}^{(f)}-p_{i_2j,k}^{(f)}|+
\sum\limits_{l=1}^\nu|p_{i_1j,l}^{(m)}-p_{i_2j,l}^{(m)}|\bigg),
$$
$$
\|M_l^{(m)}\|=\frac{1}{2}\max\limits_{j_1,j_2}\bigg(\sum\limits_{k=1}^n|p_{ij_1,k}^{(f)}-p_{ij_2,k}^{(f)}|+
\sum\limits_{l=1}^\nu|p_{ij_1,l}^{(m)}-p_{ij_2,l}^{(m)}|\bigg).
$$
\end{proof}

\begin{cor}\label{cor1} An evolutionary operator (\ref{ksobp5}) is a strict contraction if
$$
\max\limits_{i_1,i_2,j}\bigg(\sum\limits_{k=1}^n|p_{i_1j,k}^{(f)}-p_{i_2j,k}^{(f)}|+
\sum\limits_{l=1}^\nu|p_{i_1j,l}^{(m)}-p_{i_2j,l}^{(m)}|\bigg)+$$
\begin{equation} \label{norma15}
\max\limits_{j_1,j_2,i}\bigg(\sum\limits_{k=1}^n|p_{ij_1,k}^{(f)}-p_{ij_2,k}^{(f)}|+
\sum\limits_{l=1}^\nu|p_{ij_1,l}^{(m)}-p_{ij_2,l}^{(m)}|\bigg)<1
\end{equation}
\end{cor}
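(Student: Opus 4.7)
The plan is to read Corollary \ref{cor1} off directly from Theorem \ref{thm1}: the theorem asserts that $L(W)$ is bounded above by a certain sum of two maxima over the inheritance coefficients, and the corollary's hypothesis (\ref{norma15}) is nothing more than the statement that this same sum is strictly less than $1$. Concretely, first I would invoke Theorem \ref{thm1} to obtain $L(W)\le R$, where $R$ denotes the right-hand side of (\ref{detbp15}). Second, I would match $R$ term-by-term with the left-hand side of (\ref{norma15}) and conclude $L(W)<1$. Third, I would appeal to the definition recalled at the beginning of Section 3 to rephrase $L(W)<1$ as the assertion that $W$ is a strict contraction with respect to the chosen $l_1$-norm.

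There is no substantial obstacle here; the only thing worth checking is that the two displayed expressions really do coincide symbol-for-symbol, including which indices the maxima range over, and they do. As a side remark, once $L(W)<1$ has been established, Banach's fixed-point theorem applied to the compact invariant set $S=S^{n-1}\times S^{\nu-1}$ yields for free the additional conclusions flagged at the beginning of Section 3, namely a unique fixed point in $S$ together with exponentially fast convergence of every trajectory to it; these, however, lie outside what the corollary formally asserts and so need not appear in the proof itself.
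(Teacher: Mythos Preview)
Your proposal is correct and matches the paper's approach: the paper states Corollary~\ref{cor1} immediately after Theorem~\ref{thm1} without proof, treating it as an immediate consequence of the inequality~(\ref{detbp15}) together with the definition of strict contraction via $L(W)<1$. Your reading that hypothesis~(\ref{norma15}) is precisely the condition that the right-hand side of~(\ref{detbp15}) be less than~$1$ is exactly the intended argument.
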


    For evolutionary operators with positive coefficients there is a multiplicative estimate
of the distance from  the evolutionary operator to the  constant one.
Let
$$
\mu^{f}\equiv \mu^{f}(W)=\max\limits_{i_1,i_2,j,k}\frac{p^{(f)}_{i_1j,k}}{p^{(f)}_{i_2j,k}}, \  \
\mu^{m}\equiv \mu^{m}(W)=\max\limits_{i,j_1,j_2,l}\frac{p^{(m)}_{ij_1,l}}{p^{(m)}_{ij_2,l}},
$$
and let $\zeta(W)$ equal to  LHS of (15).

\begin{lemma}\label{lemma4}
\begin{equation} \label{matrix16}
\zeta(W)\leq 4\frac{\mu^{f}-1}{\mu^{f}+1}+4\frac{\mu^{m}-1}{\mu^{m}+1}.
\end{equation}
\end{lemma}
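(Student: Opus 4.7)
My plan is to reduce (\ref{matrix16}) to an elementary scalar inequality and then apply it termwise, using the stochasticity (\ref{koefbp5}) to collapse the resulting sums. The scalar fact I would establish first is that for any positive reals $a,b$ satisfying $\max(a/b,b/a)\le\mu$ one has $|a-b|\le\frac{\mu-1}{\mu+1}(a+b)$. This is immediate: setting $t=\max(a,b)/\min(a,b)\in[1,\mu]$, the ratio $|a-b|/(a+b)=(t-1)/(t+1)$ is increasing in $t$ on $[1,\infty)$ and hence is maximized at $t=\mu$.

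Next I would apply the scalar inequality coordinatewise. Fix $i_1,i_2,j$; for each female offspring type $k$, the definition of $\mu^f$ gives $|p^{(f)}_{i_1j,k}-p^{(f)}_{i_2j,k}|\le\frac{\mu^f-1}{\mu^f+1}\bigl(p^{(f)}_{i_1j,k}+p^{(f)}_{i_2j,k}\bigr)$. Summing over $k$ and using $\sum_k p^{(f)}_{ij,k}=1$ from (\ref{koefbp5}) produces $\sum_k|p^{(f)}_{i_1j,k}-p^{(f)}_{i_2j,k}|\le 2\frac{\mu^f-1}{\mu^f+1}$, and the analogous computation for the male coefficients yields $\sum_l|p^{(m)}_{i_1j,l}-p^{(m)}_{i_2j,l}|\le 2\frac{\mu^m-1}{\mu^m+1}$. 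Taking the maximum over $i_1,i_2,j$ therefore bounds the first max in $\zeta(W)$ by $2\frac{\mu^f-1}{\mu^f+1}+2\frac{\mu^m-1}{\mu^m+1}$. The second max, which fixes $i$ and varies $j_1,j_2$, is handled by the symmetric argument and yields the same bound; adding them produces exactly the right-hand side of (\ref{matrix16}).

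The main technical point to verify is that $\mu^f$ and $\mu^m$ really bound all of the ratios one needs to invoke. As written, $\mu^f$ only controls ratios of female coefficients when the female parent varies (with $j,k$ fixed), and $\mu^m$ only controls ratios of male coefficients when the male parent varies (with $i,l$ fixed). Each of the two maxima in $\zeta(W)$, however, also contains a cross-contribution (male coefficients with the female parent varying, or female coefficients with the male parent varying) that is not directly governed by either definition. I expect the intended reading is that $\mu^f$ and $\mu^m$ should be understood as global oscillation ratios of all female and all male inheritance coefficients respectively, so that the scalar estimate applies uniformly to all four sums; otherwise the argument must be supplemented with auxiliary ratio constants to handle these cross-terms.
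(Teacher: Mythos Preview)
Your approach is exactly the paper's: the scalar estimate $|a-b|\le\frac{\mu-1}{\mu+1}(a+b)$, applied termwise and summed using $\sum_k p^{(f)}_{ij,k}=\sum_l p^{(m)}_{ij,l}=1$. The paper states the scalar step as an equality $|\alpha-\beta|=\frac{\mu-1}{\mu+1}(\alpha+\beta)$ (with $\mu=\max(\alpha/\beta,\beta/\alpha)$) and then replaces the local $\mu$ by the global $\mu^{f}$ or $\mu^{m}$, which is precisely your monotonicity observation phrased differently.

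The technical point you raise in your last paragraph is well taken and is not addressed in the paper either. With $\mu^{f}$ defined as $\max_{i_1,i_2,j,k} p^{(f)}_{i_1j,k}/p^{(f)}_{i_2j,k}$ and $\mu^{m}$ as $\max_{i,j_1,j_2,l} p^{(m)}_{ij_1,l}/p^{(m)}_{ij_2,l}$, neither quantity directly bounds the cross-ratios $p^{(m)}_{i_1j,l}/p^{(m)}_{i_2j,l}$ (male coefficients, female parent varying) or $p^{(f)}_{ij_1,k}/p^{(f)}_{ij_2,k}$ (female coefficients, male parent varying) that appear in $\zeta(W)$. The paper's proof nevertheless writes down the corresponding inequalities
\[
|p_{i_1j,l}^{(m)}-p_{i_2j,l}^{(m)}|\le\frac{\mu^{m}-1}{\mu^{m}+1}\bigl(p_{i_1j,l}^{(m)}+p_{i_2j,l}^{(m)}\bigr),\qquad
|p_{ij_1,k}^{(f)}-p_{ij_2,k}^{(f)}|\le\frac{\mu^{f}-1}{\mu^{f}+1}\bigl(p_{ij_1,k}^{(f)}+p_{ij_2,k}^{(f)}\bigr)
\]
without justification, so the intended reading must indeed be that $\mu^{f},\mu^{m}$ bound \emph{all} oscillation ratios of the female and male inheritance coefficients respectively (i.e.\ maxima over both parental indices). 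Your diagnosis is therefore correct, and your proof is at least as complete as the paper's.
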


\begin{proof}
If $\alpha,\beta>0$ and $\mu=\max(\frac{\alpha}{\beta},\frac{\beta}{\alpha})$ then obviously
$$
|\alpha-\beta|=\frac{\mu-1}{\mu+1}(\alpha+\beta).
$$
Hence
$$
|p_{i_1j,k}^{(f)}-p_{i_2j,k}^{(f)}|\leq\frac{\mu^{f}-1}{\mu^{f}+1}(p_{i_1j,k}^{(f)}+p_{i_2j,k}^{(f)}),\   \
|p_{i_1j,l}^{(m)}-p_{i_2j,l}^{(m)}|\leq\frac{\mu^{m}-1}{\mu^{m}+1}(p_{i_1j,l}^{(m)}+p_{i_2j,l}^{(m)}),
$$
and respectively
$$
|p_{ij_1,k}^{(f)}-p_{ij_2,k}^{(f)}|\leq\frac{\mu^{f}-1}{\mu^{f}+1}(p_{ij_1,k}^{(f)}+p_{ij_2,k}^{(f)}), \  \
|p_{ij_1,l}^{(m)}-p_{ij_2,l}^{(m)}|\leq\frac{\mu^{m}-1}{\mu^{m}+1}(p_{ij_1,l}^{(m)}+p_{ij_2,l}^{(m)}).
$$

It remains to sum these inequalities over $k$ and respectively over $l$, keeping in mind that
$$
\sum\limits_{k=1}^n p_{i_1j,k}^{(f)}=\sum\limits_{k=1}^n p_{i_2j,k}^{(f)}=
\sum\limits_{l=1}^\nu p_{ij_1,l}^{(m)}=\sum\limits_{l=1}^\nu p_{ij_2,l}^{(m)}=1.
$$
\end{proof}

\begin{cor}\label{cor2}
$$L(W)\leq 4\frac{\mu^{f}-1}{\mu^{f}+1}+4\frac{\mu^{m}-1}{\mu^{m}+1}.$$
\end{cor}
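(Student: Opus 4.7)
The plan is to obtain Corollary \ref{cor2} as an immediate chaining of Theorem \ref{thm1} with Lemma \ref{lemma4}. By construction, $\zeta(W)$ is defined to equal the right-hand side of (\ref{detbp15}), so Theorem \ref{thm1} already states the bound $L(W)\leq \zeta(W)$. Lemma \ref{lemma4} then furnishes the multiplicative estimate $\zeta(W)\leq 4\frac{\mu^{f}-1}{\mu^{f}+1}+4\frac{\mu^{m}-1}{\mu^{m}+1}$. Transitivity of $\leq$ yields the desired inequality, so the corollary reduces to a single invocation of each previous result.

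Concretely, my proof would be a one-liner: ``By Theorem \ref{thm1} we have $L(W)\leq \zeta(W)$, and by Lemma \ref{lemma4} $\zeta(W)\leq 4\frac{\mu^{f}-1}{\mu^{f}+1}+4\frac{\mu^{m}-1}{\mu^{m}+1}$, from which the claim follows.'' The only subtlety worth flagging is a standing hypothesis rather than a genuine step: the quantities $\mu^f$ and $\mu^m$ are defined only when all coefficients $p^{(f)}_{ij,k}$ and $p^{(m)}_{ij,l}$ are strictly positive, and this is precisely the setting advertised in the paragraph that introduces $\mu^f, \mu^m$ (``For evolutionary operators with positive coefficients''). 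Under this hypothesis both ratios are finite and the bound is meaningful.

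There is no real obstacle here, since the substantive work has already been done in proving Theorem \ref{thm1} (the norm computation via $d_zW=2M_z$ and Lemmas \ref{lemma2}--\ref{lemma3}) and in proving Lemma \ref{lemma4} (the elementary identity $|\alpha-\beta|=\frac{\mu-1}{\mu+1}(\alpha+\beta)$ combined with the row-sum normalisation). If anything, the natural thing to add after the proof is a remark explaining why the statement is useful: when $\mu^f$ and $\mu^m$ are sufficiently close to $1$ (i.e.\ the inheritance coefficients exhibit only a small multiplicative spread across the row/column indices being varied), the right-hand side is less than $1$ and Corollary \ref{cor2} together with Corollary \ref{cor1} certifies that $W$ is a strict contraction. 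This supplies a multiplicative analogue of the additive contraction criterion of Corollary \ref{cor1}, matching the heuristic in the introduction about ``sufficiently small scattering'' of the coefficients.
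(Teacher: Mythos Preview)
Your proposal is correct and matches the paper's intent: the paper gives no explicit proof of Corollary~\ref{cor2}, treating it as immediate from Theorem~\ref{thm1} (which asserts $L(W)\le \zeta(W)$) together with Lemma~\ref{lemma4}. Your observation about the standing positivity hypothesis on the coefficients is also accurate and worth noting, since the paper leaves it implicit.
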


\begin{cor}\label{cor3}
If $ \  \  7\mu^f\mu^m-(\mu^f+\mu^m)<9$ then the evolutionary operator (\ref{ksobp5})
is a strict contraction.
\end{cor}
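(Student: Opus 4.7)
The plan is to chain together Corollary \ref{cor2} with the definition of strict contraction and then reduce to elementary algebra. By Corollary \ref{cor2}, the Lipschitz constant satisfies
$$L(W)\leq 4\frac{\mu^f-1}{\mu^f+1}+4\frac{\mu^m-1}{\mu^m+1},$$
and $W$ is a strict contraction as soon as $L(W)<1$. So it suffices to identify the region in $(\mu^f,\mu^m)$ where the right-hand side above is strictly less than $1$, and to show this region coincides with the one described by $7\mu^f\mu^m-(\mu^f+\mu^m)<9$.

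First I would note that $\mu^f,\mu^m\geq 1$, so both denominators $\mu^f+1$ and $\mu^m+1$ are strictly positive, which allows me to clear denominators without flipping the inequality. Combining the two fractions over the common denominator $(\mu^f+1)(\mu^m+1)$, the numerator becomes
$$4(\mu^f-1)(\mu^m+1)+4(\mu^m-1)(\mu^f+1)=8(\mu^f\mu^m-1),$$
after the cross-terms $\pm(\mu^f-\mu^m)$ cancel. Hence the sufficient condition $L(W)<1$ reduces to
$$8(\mu^f\mu^m-1)<(\mu^f+1)(\mu^m+1)=\mu^f\mu^m+\mu^f+\mu^m+1.$$

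Rearranging gives exactly $7\mu^f\mu^m-(\mu^f+\mu^m)<9$, so the hypothesis of the corollary forces $L(W)<1$, and $W$ is a strict contraction. The step I expect to require the most care is the algebraic reduction and ensuring positivity of the denominators so that clearing them preserves the inequality; otherwise the argument is purely computational and carries no real obstacle.
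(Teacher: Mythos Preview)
Your argument is correct and is exactly the intended one: the paper states Corollary \ref{cor3} without proof, as an immediate consequence of Corollary \ref{cor2}, and your computation---clearing the positive denominators $(\mu^f+1)(\mu^m+1)$ and simplifying $4(\mu^f-1)(\mu^m+1)+4(\mu^m-1)(\mu^f+1)=8(\mu^f\mu^m-1)$---is precisely the algebra that justifies this step. There is nothing to add.
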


\begin{cor}\label{cor4}
Let  $ \mu= \max(\mu^f,\mu^m)$. Then
$$L(W)\leq 8 \frac{\mu-1}{\mu+1}$$
 and if $\mu<\frac{9}{7}$ then the evolutionary operator (\ref{ksobp5})
is a strict contraction.
\end{cor}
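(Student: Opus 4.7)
The plan is to deduce Corollary \ref{cor4} directly from Corollary \ref{cor2}, using the monotonicity of the auxiliary function
$$
\varphi(x)=\frac{x-1}{x+1}.
$$
Since $\varphi'(x)=\frac{2}{(x+1)^2}>0$, the function $\varphi$ is strictly increasing on $[1,\infty)$. Note that $\mu^f\geq 1$ and $\mu^m\geq 1$ by their definition as maxima of ratios of nonnegative numbers that sum to $1$ (so at least one such ratio is $\geq 1$), and $\mu=\max(\mu^f,\mu^m)$ dominates both.

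First I would apply this monotonicity to write
$$
\frac{\mu^f-1}{\mu^f+1}\leq \frac{\mu-1}{\mu+1},\qquad \frac{\mu^m-1}{\mu^m+1}\leq \frac{\mu-1}{\mu+1}.
$$
Substituting these bounds into the inequality provided by Corollary \ref{cor2},
$$
L(W)\leq 4\frac{\mu^f-1}{\mu^f+1}+4\frac{\mu^m-1}{\mu^m+1},
$$
immediately yields the first claim $L(W)\leq 8\,\frac{\mu-1}{\mu+1}$.

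For the contraction criterion, I would solve $8\,\frac{\mu-1}{\mu+1}<1$ for $\mu$: clearing the positive denominator gives $8(\mu-1)<\mu+1$, i.e.\ $7\mu<9$, i.e.\ $\mu<\tfrac{9}{7}$. Thus if $\mu<\tfrac{9}{7}$, then $L(W)<1$ and $W$ is a strict contraction. (As a sanity check, this is consistent with specializing Corollary \ref{cor3}: setting $\mu^f=\mu^m=\mu$ in $7\mu^f\mu^m-(\mu^f+\mu^m)<9$ gives $7\mu^2-2\mu-9<0$, which factors as $(7\mu-9)(\mu+1)<0$ and reduces to $\mu<\tfrac{9}{7}$.)

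There is essentially no obstacle here; the only point requiring a bit of care is the monotonicity of $\varphi$ together with the fact that $\mu^f,\mu^m\geq 1$, which justifies replacing both arguments by their maximum $\mu$ without reversing the inequality.
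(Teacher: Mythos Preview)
Your argument is correct and is exactly the intended deduction: the paper states Corollary~\ref{cor4} without proof, as an immediate consequence of Corollary~\ref{cor2}, and your monotonicity argument for $\varphi(x)=(x-1)/(x+1)$ together with the elementary solution of $8\varphi(\mu)<1$ is precisely how that consequence is obtained.
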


    Let us give several examples and check the condition of Corollary \ref{cor1}.

{\bf Example 1.} Consider the operator
\begin{equation} \label{ksobp17}
W:\left\{\begin{array}{llll}
x^\prime_1=\frac{3}{7}x_1y_1+\frac{1}{2}x_1y_2+\frac{1}{2}x_2y_1+\frac{4}{7}x_2y_2,\\[2mm]
x^\prime_2=\frac{4}{7}x_1y_1+\frac{1}{2}x_1y_2+\frac{1}{2}x_2y_1+\frac{3}{7}x_2y_2,\\[2mm]
y^\prime_1=\frac{4}{7}x_1y_1+\frac{1}{2}x_1y_2+\frac{1}{2}x_2y_1+\frac{3}{7}x_2y_2,\\[2mm]
y^\prime_2=\frac{3}{7}x_1y_1+\frac{1}{2}x_1y_2+\frac{1}{2}x_2y_1+\frac{4}{7}x_2y_2.\\[2mm]
\end{array}\right.
\end{equation}
The coefficients of the operator (\ref{ksobp17}) as the following
$$
\begin{array}{llll}
p_{11,1}^{(f)}=\frac{3}{7} & p_{12,1}^{(f)}=\frac{1}{2} & p_{21,1}^{(f)}=\frac{1}{2} & p_{22,1}^{(f)}=\frac{4}{7}\\[2mm]
p_{11,2}^{(f)}=\frac{4}{7} & p_{12,2}^{(f)}=\frac{1}{2} & p_{21,2}^{(f)}=\frac{1}{2} & p_{22,2}^{(f)}=\frac{3}{7}\\[2mm]
p_{11,1}^{(m)}=\frac{4}{7} & p_{12,1}^{(m)}=\frac{1}{2} & p_{21,1}^{(m)}=\frac{1}{2} & p_{22,1}^{(m)}=\frac{3}{7}\\[2mm]
p_{11,2}^{(m)}=\frac{3}{7} & p_{12,2}^{(m)}=\frac{1}{2} & p_{21,2}^{(m)}=\frac{1}{2} & p_{22,2}^{(m)}=\frac{4}{7}\\[2mm]
\end{array}
$$

    It is easy to check that condition (\ref{norma15}) satisfied for (\ref{ksobp17}). Indeed,

$$
\max\limits_{i_1,i_2,j}\bigg(\sum\limits_{k=1}^n|p_{i_1j,k}^{(f)}-p_{i_2j,k}^{(f)}|+
\sum\limits_{l=1}^\nu|p_{i_1j,l}^{(m)}-p_{i_2j,l}^{(m)}|\bigg)+$$
$$
\max\limits_{j_1,j_2,i}\bigg(\sum\limits_{k=1}^n|p_{ij_1,k}^{(f)}-p_{ij_2,k}^{(f)}|+
\sum\limits_{l=1}^\nu|p_{ij_1,l}^{(m)}-p_{ij_2,l}^{(m)}|\bigg)=\frac{4}{7}.
$$

    Consequently, this operator is a strict contraction and it has unique fixed point
$(\frac{1}{2},\frac{1}{2},\frac{1}{2},\frac{1}{2})$. Moreover any trajectory of (\ref{ksobp17})
converges to the fixed point.

    The following example shows that the condition of Corollary \ref{cor1} is not satisfied and evolutionary
operator has periodic trajectory.

{ \bf Example 2.} Consider the operator
\begin{equation} \label{ksobp18}
W:\left\{\begin{array}{llll}
x^\prime_1=x_1y_1\\[2mm]
x^\prime_2=x_1y_2+x_2\\[2mm]
y^\prime_1=x_2y_2\\[2mm]
y^\prime_2=x_1+x_2y_1\\[2mm]
\end{array}\right.
\end{equation}

 It easy to check that operator (\ref{ksobp18}) does not satisfy the condition of
Corollary \ref{cor1}.

    We rewrite the operator (\ref{ksobp18}) in the form
\begin{equation} \label{ksobp181}
W:\left\{\begin{array}{ll}
x^\prime_1=x_1y_1\\[2mm]
y^\prime_1=(1-x_1)(1-y_1)\\[2mm]
\end{array}\right.
\end{equation}

  Denote $x_n=x_1^{(n)}, \  \ y_n=y_1^{(n)}$ then from (\ref{ksobp181}) we have
\begin{equation} \label{ksobp191}
\left\{\begin{array}{ll}
x_{n+1}=x_ny_n\\[2mm]
y_{n+1}=(1-x_n)(1-y_n)\\[2mm]
\end{array}\right.
\end{equation}

  Since $ 0\leq x_ny_n \leq x_n$ from the first equation of (\ref{ksobp191}) it follows
that $\lim\limits_{n\rightarrow\infty}x_n=x^*=0$. Indeed, for
$(x^0,y^0)\in int ( S^1\times S^1)$ we get from (\ref{ksobp191})
$$
\frac{x_{n+2}}{x_{n+1}}=(1-x_n)\bigg(1-\frac{x_{n+1}}{x_{n}}\bigg),
$$
$$
x_{n+2}x_n=x_{n+1}(1-x_n)(x_n-x_{n+1}),
$$
$$
\lim\limits_{n\rightarrow\infty}x_{n+2}x_n=\lim\limits_{n\rightarrow\infty}x_{n+1}(1-x_n)(x_n-x_{n+1}),
$$
$$
(x^{*})^2=0, \  \  x^{*}=0.
$$
Now consider the operator
\begin{equation} \label{ksobp192}
W^2:\left\{\begin{array}{ll}
x'=xy-x^2y-xy^2+x^2y^2\\[2mm]
y'=x+y-xy-x^2y-xy^2+x^2y^2\\[2mm]
\end{array}\right.
\end{equation}

Clearly, the operator $W^2$ has fixed points  $(0,y)$, $0\leq y\leq 1$. The point
$(0,y)$ is a saddle point.

It is easy to check that the set $\{(x,y)\in S^1\times S^1: x_1=0\}$ is an invariant subset for (\ref{ksobp18}). Any point of the invariant subset
is periodic point with period two for operator (\ref{ksobp18}). So trajectory of the operator with an initial point form invariant subset does not converge.
Thus operator (\ref{ksobp18}) has a trajectory which does non-converge to the fixed point $(0,1,{1\over 2}, {1\over 2})$.\\

    The following example shows that condition of Corollary \ref{cor1} is sufficient but is
not necessary.

{ \bf Example 3.}  Consider the operator with coefficients of inheritance
$$
\begin{array}{llll}
p_{11,1}^{(f)}=0 & p_{12,1}^{(f)}=0 & p_{21,1}^{(f)}=\frac{1}{2} & p_{22,1}^{(f)}=\frac{1}{2}\\[2mm]
p_{11,2}^{(f)}=1 & p_{12,2}^{(f)}=1 & p_{21,2}^{(f)}=\frac{1}{2} & p_{22,2}^{(f)}=\frac{1}{2}\\[2mm]
p_{11,1}^{(m)}=0 & p_{12,1}^{(m)}=\frac{1}{2} & p_{21,1}^{(m)}=0 & p_{22,1}^{(m)}=\frac{1}{2}\\[2mm]
p_{11,2}^{(m)}=1 & p_{12,2}^{(m)}=\frac{1}{2} & p_{21,2}^{(m)}=1 & p_{22,2}^{(m)}=\frac{1}{2}\\[2mm]
\end{array}
$$
i.e.  the evolution operator has the form
\begin{equation} \label{ksobp19}
W:\left\{\begin{array}{llll}
x^\prime_1=\frac{1}{2}x_2\\[2mm]
x^\prime_2=x_1+\frac{1}{2}x_2\\[2mm]
y^\prime_1=\frac{1}{2}y_2\\[2mm]
y^\prime_2=y_1+\frac{1}{2}y_2\\[2mm]
\end{array}\right.
\end{equation}

 It easy to check that operator (\ref{ksobp19}) does not satisfy the condition of
Corollary \ref{cor1}.
$$
\max\limits_{i_1,i_2,j}\bigg(\sum\limits_{k=1}^n|p_{i_1j,k}^{(f)}-p_{i_2j,k}^{(f)}|+
\sum\limits_{l=1}^\nu|p_{i_1j,l}^{(m)}-p_{i_2j,l}^{(m)}|\bigg)+$$
$$
\max\limits_{j_1,j_2,i}\bigg(\sum\limits_{k=1}^n|p_{ij_1,k}^{(f)}-p_{ij_2,k}^{(f)}|+
\sum\limits_{l=1}^\nu|p_{ij_1,l}^{(m)}-p_{ij_2,l}^{(m)}|\bigg)=2>1.
$$
But any trajectory of (\ref{ksobp19}) converges to $(\frac{1}{3}, \frac{2}{3},\frac{1}{3}, \frac{2}{3})$.

Indeed, from (\ref{ksobp19}) we have

$$
x^{(n+1)}_1=\frac{1}{2}(1-x^{(n)}_1)
$$

We consider following one dimensional dynamical system.

$$f(x)=\frac{1}{2}(1-x)$$

It has unique fixed point $x=\frac{1}{3}$  and decreasing on $[0,1]$.
Easy to check that  $f'(x)=-\frac{1}{2}$ and  $|f'(\frac{1}{3})|=\frac{1}{2}<1$ therefore the
fixed point $x=\frac{1}{3}$ is attracting.

We claim that any trajectory of $f(x)$ converges to the fixed point $x=\frac{1}{3}$. Indeed, we have
$$
 f^n(x)=\sum\limits_{k=1}^n \frac{(-1)^k}{2^k}+(-1)^n\cdot\frac{x}{2^n}
$$
and
$$
\lim \limits_{n\rightarrow\infty} f^{2n} (x) = \lim \limits_{n\rightarrow\infty}
\bigg(\frac{1}{3}\cdot\frac{2^{2n}-1}{2^{2n}}+\frac{x}{2^{2n}}\bigg)=\frac{1}{3},
$$
$$
\lim \limits_{n\rightarrow\infty} f^{2n+1} (x) = \lim \limits_{n\rightarrow\infty}
\bigg(\frac{1}{3}\cdot\frac{2^{2n}-1}{2^{2n}}+\frac{1}{2^{2n+1}}-\frac{x}{2^{2n+1}}\bigg)=\frac{1}{3}.
$$

So for any initial point  trajectory of (\ref{ksobp19}) converges to $(\frac{1}{3}, \frac{2}{3},\frac{1}{3}, \frac{2}{3})$.

 \vskip 0.3 truecm

 {\bf Acknowledgment.} The final part of this work was done at the International Islamic  University of Malaysia (IIUM)
and the second author would like to thank the IIUM for providing financial support and all facilities. The second author also thanks
Dr. Mansoor Saburov for useful discussions. \\

 \vskip 0.3 truecm

\begin{center}

\end{center}

\begin{thebibliography}{99}


\bibitem{Br} Bernstein S.N. The solution of a mathematical
problem related to the theory of heredity.  {\it Uchn. Zapiski. NI
Kaf. Ukr. Otd. Mat.} 1924. no. 1., 83-115 (Russian).

\bibitem{Lyu} Lyubich Yu.I. Mathematical structures in population genetics. {\it
Biomathematics}, {\bf 22,} Springer-Verlag, 1992.

\bibitem{GR1} Ganikhodzhaev R.N. Quadratic stochastic operators,
Lyapunov function and tournaments.  {\it Acad. Sci. Sb.Math.} {\bf
76} (1993),  no. 2, 489-506.

\bibitem{GR2} Ganikhodzhaev R.N.  A chart of fixed points and
Lyapunov functions for a class of discrete dynamical systems. {\it
Math. Notes} {\bf 56} (1994), no. 5-6, 1125-1131.

\bibitem{GE} Ganikhodzhaev R.N., Eshmamatova D.B. Quadratic
automorphisms of a simplex and the asymptotic behavior of their
trajectories. {\it Vladikavkaz. Mat. Zh.} {\bf 8} (2006), no. 2,
12-28 (Russian).

\bibitem{GN} Ganikhodzhaev N.N.  An application of the theory of
Gibbs distributions to mathematical genetics.  {\it Dokl. Math.}
{\bf 61} (2001), 321-323.

\bibitem{RSh} Rozikov U.A., Shamsiddinov N.B. On non-Volterra
quadratic stochastic operators generated by a product measure.
{\it Stoch. Anal. Appl.}  Vol. {\bf 27},  2 (2009),  353-362.

\bibitem{RL} Ladra M., Rozikov U.A. Evolution algebra of a bisexual
population.{\it ArXiv:} 0004189 [math.AC]

\bibitem{ROL} Ladra M., Omirov B.A., Rozikov U.A.  On dibaric and evolution algebras.
{\it ArXiv:} 1104.2578 [math.RA]

\bibitem{RZh1} Rozikov U.A.,Zhamilov U.U. On F- quadratic
stochastic operators. {\it Math. Notes} {\bf 83} (2008), no. 4,
554-559.

\bibitem{RZh2} Zhamilov U.U., Rozikov U.A.   On dynamics of
stricly non-Volterra quadratic stochastic operators defined on the
two dimensional simplex. {\it Sb. Math.}  Vol. {\bf 200}. no. 9
(2009), 1339-1351.

\bibitem{RZh3} Rozikov U.A., Zhamilov U.U.   Volterra quadratic stochastic
operators of a bisexual population. {\it Ukr. Math. Jour.} Vol.
{\bf 63}. no. 7 (2011), 985-998.(Russian)

\bibitem{GRRU} Ganikhodzhaev R.N., Mukhamedov F.M., Rozikov U.A. Quadratic
stochastic operators: Results and open problems. {\it Infinite Dimensional Analysis, Quantum Probability
and Related Topics}. Vol. {\bf 14}. no. 2 (2011), 279–335.

\bibitem{ZhM} Zhamilov U.U., Mukhiddinov R.T. On conditional
quadratic stochastic operators.  {\it Uzbek. Math. Zh.} no. 2
(2010), 31-38 (Russian).

\bibitem{Zh} Zhamilov U.U. $FM-$ Volterra quadratic stochastic operators.
{\it Dokl. Acad. Nauk. RUz} no. 3 (2010), 10-14 (Russian).

\end{thebibliography}
\end{document}